\documentclass{amsart}
\usepackage[demo]{graphicx}
\usepackage{caption}
\usepackage{subcaption}
\usepackage{graphicx, amsmath, amsfonts, epsfig, latexsym, amssymb, amsthm}
\usepackage{tikz}
\usepackage{tkz-berge}
\usepackage{float}

\usetikzlibrary {positioning}
\definecolor {processblue}{cmyk}{0.96,0,0,0}
\newtheorem{thm}{Theorem}[section]
\theoremstyle{definition}
\newtheorem{cor}[thm]{Corollary}
\newtheorem{prop}[thm]{Proposition}
\newtheorem{defn}[thm]{Definition}
\newtheorem{lem}[thm]{Lemma}

\newtheorem{Note}[thm]{Notation}

\newtheorem{ex}[thm]{Example}

\numberwithin{equation}{section}
\begin{document}
\title[The $i$-extended ideal-based cozero-divisor graph of a commutative ring]
{The $i$-extended ideal-based cozero-divisor graph of a commutative ring}
\author{ Faranak Farshadifar}
\address{Department of Mathematics Education, Farhangian University, P.O. Box 14665-889, Tehran, Iran.}
\email{f.farshadifar@cfu.ac.ir}

\subjclass[2010]{13A15, 13A99}%
\keywords {Graph, cozero-divisor,  $i$-extended, ideal-based cozero-divisor}

\begin{abstract}
Let $R$ be a commutative ring with identity and let $J$ be an ideal of $R$. In this paper, we introduce and investigate the notion of the $i$-extended ideal-based cozero-divisor graph of $R$. This graph, denoted by $\overline{\Gamma''}_{Ji}(R)$,
is a simple graph of $R$ whose vertex set is $\{x \in R\setminus J\: |\: xR+J\not=R \}$. Two distinct vertices $x$ and $y$ are adjacent if and only if $x^m \not \in y^nR+J$ and $y^n \not \in x^mR+J$ for some positive integers $m$ and $n$ with $n\leq i$ and $m\leq i$.
\end{abstract}
\maketitle
\section{Introduction}
\noindent
Throughout this paper, $R$ will denote a commutative ring with identity and $\Bbb Z$ will denote the ring of of integers. Also, the Jacobson radical of $R$ will denote by $Jac(R) $.

A \emph{graph} $G$ is defined as the pair $(V(G),E(G))$, where $V(G)$ is the set of vertices of $G$ and $E(G)$ is the set of edges of $G$. For two distinct vertices $a$ and $b$ of $V(G)$, the notation $a-b$ means that $a$ and $b$ are adjacent. A graph $G$ is said to be \emph{complete} if $a-b$ for all distinct $a, b\in V(G)$. A graph
$G$ is said to be an \emph{empty graph} if $E(G) =\emptyset$. Note by this definition that a graph may be empty even if $V (G)\not =\emptyset$.

In \cite{1}, the authors introduced and investigated the \emph{cozero-divisor graph} $\Gamma'(R)$ of $R$, in which the vertices are precisely the non-zero, non-unit elements of $R$ and two distinct vertices $x$ and $y$ are adjacent if and only if $x \not \in yR$ and $y \not \in xR$. Let $I$ be an ideal of $R$. The authors in \cite{00}, introduced and studied a generalization of cozero-divisor graph $\acute{\Gamma}_I(R)$ of $R$ with vertices $\{x \in R  \setminus Ann_R(I)\: |\: xI \neq I \}$ and two distinct vertices $x$ and $y$ are adjacent if and only if $x \not \in yI$ and $y \not \in xI$. In fact, $\acute{\Gamma}_I(R)$ is a generalization of cozero-divisor graph introduced in \cite{1} when $I = R$. Farshadifar, in \cite{403},  introduced and investigated a new \textit{generalization of cozero-divisor graph with respect to $I$}, denoted by $\Gamma''_I(R)$, which is
a simple graph with vertices $\{x \in R\setminus I\: |\: xR+I\not=R \}$ and two distinct vertices $x$ and $y$ are adjacent if and only if  $x \not \in yR+I$ and $y \not \in xR+I$.  In \cite{14051}, the present author introduced and studied the \textit{extended ideal based
cozero-divisor graph of} $R$, denoted by $\overline{\Gamma''}_I(R)$, which is
a simple graph $\overline{\Gamma''}_I(R)$ of $R$ with vertices $\{x \in R\setminus I\: |\: xR+I\not=R \}$. The distinct vertices $x$ and $y$ are adjacent if and only if $x^m \not \in y^nR+I$ and $y^n \not \in x^mR+I$ for some positive integers $n$ and $m$.

In \cite{Bennis2021}, the authors introduced and studied
a parameterized family of graphs $\{\overline{\Gamma}_i(R)\}_{i\in \Bbb N^*}$, for $R$, which
reveals more of the relationship between powers of zero-divisors as follows:
For a positive integer $i$, the \textit{$i$-extended zero-divisor graph} of $R$, is
a simple graph denoted by $\overline{\Gamma}_i(R)$ with vertex set $Z^*(R)$ and such that two distinct
vertices $x$ and $y$ are joined by an edge if there exist two positive integers $n\leq i$ and $m\leq i$ such that $x^ny^m = 0$ with $x^n \not= 0$ and $y^m \not= 0$ \cite{Bennis2021}.

Let $J$ be an ideal of $R$.
In this paper, we introduce and investigate
the notion of the \textit{$i$-extended ideal-based cozero-divisor graph of} $R$ which is
a simple graph $\overline{\Gamma''}_{Ji}(R)$ of $R$ with vertices $\{x \in R\setminus J\: |\: xR+J\not=R \}$. The distinct vertices $x$ and $y$ are adjacent if and only if $x^m \not \in y^nR+J$ and $y^n \not \in x^mR+J$ for some positive integers $m$ and $n$ with $n\leq i$ and $m\leq i$. This can be regarded as a dual of the $i$-extended zero-divisor graph of $R$ when $I=0$.

\section{Main Results}
\begin{defn}\label{2.1}
Let $J$ be an ideal of $R$. For a positive integer $i$, the \textit{$i$-extended ideal based
cozero-divisor graph of $R$}, is
a simple graph $\overline{\Gamma''}_{Ji}(R)$ of $R$ with vertices $\{x \in R\setminus J\: |\: xR+J\not=R \}$. The distinct vertices $x$ and $y$ are adjacent if and only if $x^m \not \in y^nR+J$ and $y^n \not \in x^mR+J$ for some positive integers $m$ and $n$ with $n\leq i$ and $m\leq i$. This can be regarded as a dual of the $i$-extended zero-divisor graph of $R$ when $I=0$.
\end{defn}

\begin{Note}
Let $J$ be an ideal of $R$. Clearly, if $J$ is a maximal ideal of $R$ or $J=R$, then $\overline{\Gamma''}_{Ji}(R) = \emptyset$ for each $i \in \Bbb N$. So, in the rest of this paper $J$ is a proper non-maximal ideal of $R$. We denote the set of vertices of
 $\overline{\Gamma''}_{Ji}(R) = \emptyset$ by $V$ for each $i \in \Bbb N$.
 \end{Note}

Let $J$ be an ideal of $R$.
Clearly, the 1-extended cozero-divisor graph of $R$ is equal to $\Gamma''_{J}(R)$, i.e., $\overline{\Gamma''}_{J1}(R)=\Gamma''_{J}(R)$. In addition, one can see that the family $\{\overline{\Gamma''}_{Ji}(R)\}_{i\in \Bbb N}$ forms a
filtration of the extended cozero-divisor graph $\overline{\Gamma''}_{J}(R)$. Namely, $\overline{\Gamma''}_{J}(R)=\bigcup_{i \in \Bbb N}\overline{\Gamma''}_{Ji}(R)$.

\begin{ex}\label{2.14}
Consider the ring $R = \Bbb Z_2[X, Y]$. Clearly, $X, XY \in V$. In addition, 
$XY \not \in RX^2$ and $X^2 \not \in RXY$. Thus $X$ and $XY$ are adjacent in $\overline{\Gamma''}_{02}(R)$. But $XY \in RX$ implies that $X$ and $XY$ are not adjacent in $\overline{\Gamma''}_{01}(R)$.
\end{ex}

In \cite{Bennis2021}, it is shown that $\overline{\Gamma}_{i}(\Bbb Z_{p^n})$ is a complete graph for all $i \geq \frac{n}{2}$. But in the Proposition \ref{2.12}, we can see that  $\overline{\Gamma''}_{0i}(\Bbb Z_{p^n})$ is an empty graph for each $i \in \Bbb N$.
\begin{prop}\label{2.12}
Let $n \in \Bbb N$ and $p$ be a prime number. Then $\overline{\Gamma''}_{0i}(\Bbb Z_{p^n})$ is an empty graph for each $i \in \Bbb N$.
\end{prop}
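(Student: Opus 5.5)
The plan is to use the fact that $\Bbb Z_{p^n}$ is a chain ring: its ideals are totally ordered by inclusion. From this I will show that no two distinct vertices can be adjacent, for any choice of exponents $m,n\le i$.

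First identify the vertex set. With $J=0$, $V=\{x\in \Bbb Z_{p^n}\setminus\{0\} : xR\neq R\}$, which is the set of nonzero nonunits. If $n=1$, the ring is a field, so $V=\emptyset$ and the graph is trivially empty. This is consistent with the Notation, since $0$ is then maximal. So assume $n\ge 2$.

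Next record the chain structure. Every element of $R=\Bbb Z_{p^n}$ has the form $x=p^a u$ with $u$ a unit and $0\le a\le n$, where $a=n$ means $x=0$. Hence $xR=p^aR$, and the ideals of $R$ are exactly $R\supseteq pR\supseteq p^2R\supseteq\cdots\supseteq p^nR=0$, a chain.

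Now take distinct vertices $x,y\in V$ and arbitrary positive integers $m,n'\le i$. The principal ideals $x^mR$ and $y^{n'}R$ are comparable. If $x^mR\subseteq y^{n'}R$ then $x^m\in y^{n'}R+0$, and otherwise $y^{n'}\in x^mR+0$. In either case the adjacency condition of Definition \ref{2.1} fails for the pair $(m,n')$. This covers the case where some power is $0$, since $0$ lies in every ideal. As $m,n'$ were arbitrary, $x$ and $y$ are never adjacent, so $E=\emptyset$ for every $i$.

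There is no real obstacle here. The only care needed is the clash of notation between the exponent $n$ in the definition and the $n$ in $\Bbb Z_{p^n}$, which I will rename (say to $m,k$), and the degenerate cases $n=1$ or zero powers, which are handled above.
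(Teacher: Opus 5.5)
Your proof is correct and is essentially the paper's argument: the paper also writes vertices as $p^t z$ and $p^s w$ with $z,w$ units and compares $t$ with $s$ to conclude $y\in x\Bbb Z_{p^n}$ or $x\in y\Bbb Z_{p^n}$, which is exactly your observation that the principal ideals of $\Bbb Z_{p^n}$ form a chain. Your version is slightly more complete. The paper only compares $x$ and $y$ themselves (the case $m=n=1$), whereas you apply comparability explicitly to all powers $x^m$, $y^{n'}$ with $m,n'\le i$, including powers that vanish.
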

\begin{proof}
Let $x, y \in V(\overline{\Gamma''}_{0i}(\Bbb Z_{p^n}))$. Then $x=p^tz$ and $y=p^sw$, where $\mathrm{gcd}(p,z)=1$ and $\mathrm{gcd}(p,w)=1$.  As $\mathrm{gcd}(p^t,z)=1$ (resp., $\mathrm{gcd}(p^s,w)=1$), we have $z\Bbb Z_{p^n}=\Bbb Z_{p^n}$ (resp., $w\Bbb Z_{p^n}=\Bbb Z_{p^n}$).
If $t=s$, then $x \in y\Bbb Z_{p^n}$ and $y \in x\Bbb Z_{p^n}$. Hence $x$ is not adjacent to $y$. So suppose that $t<s$. Then
$p^s \in p^t\Bbb Z_{p^n}=p^tz\Bbb Z_{p^n}$. Hence, $y=p^sw \in  p^tz\Bbb Z_{p^n}=x\Bbb Z_{p^n}$. So, $x$ is not adjacent to $y$. Therefore $E(\overline{\Gamma''}_{0i}(\Bbb Z_{p^n}))=\emptyset$. Thus
$\overline{\Gamma''}_{0i}(\Bbb Z_{p^n})$ is an empty graph for each $i \in \Bbb N$.
\end{proof}

\begin{prop}\label{2.9}
Let $p$ and $q$ be two primes and $n \in \Bbb N$. Then $\overline{\Gamma''}_{0(n-1)}(\Bbb Z_{p^nq})\not=\overline{\Gamma''}_{0n}(\Bbb Z_{p^nq})$ for all $n>1$.
\end{prop}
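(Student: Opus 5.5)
We take $p\neq q$ throughout: if $p=q$ then $\Bbb Z_{p^nq}=\Bbb Z_{p^{n+1}}$, and by Proposition \ref{2.12} every graph in the family is empty, so the statement would fail. For $p\neq q$ the plan is to exhibit one explicit pair of vertices that is adjacent in $\overline{\Gamma''}_{0n}(\Bbb Z_{p^nq})$ but not in $\overline{\Gamma''}_{0(n-1)}(\Bbb Z_{p^nq})$. Since $p\neq q$, the Chinese Remainder Theorem gives $R=\Bbb Z_{p^nq}\cong \Bbb Z_{p^n}\times \Bbb Z_q$, and we compute componentwise. Every principal ideal of $R$ has the form $p^{c}\Bbb Z_{p^n}\times B$ with $0\le c\le n$ and $B\in\{0,\Bbb Z_q\}$. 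So membership $u\in vR$ reduces to two checks: the $p$-adic valuation of the first coordinate, capped at $n$, and whether the second coordinate is zero.

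The candidate pair is $x=p$ and $y=p^{n-1}q$. Under the isomorphism these are $x=(p,\bar p)$, where $\bar p$ is a unit in $\Bbb Z_q$, and $y=(p^{n-1},0)$. Both are nonzero non-units, so both lie in $V$. For positive integers $m,k$ we get
\[
x^mR=p^{\min(m,n)}\Bbb Z_{p^n}\times \Bbb Z_q, \qquad y^kR=p^{\min(k(n-1),n)}\Bbb Z_{p^n}\times 0 .
\]

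First, $x^m\notin y^kR$ for all $m,k$, because the second coordinate of $x^m$ is a unit while $y^kR$ has zero second coordinate. So adjacency depends only on whether $y^k\notin x^mR$ for some admissible $m,k$.

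\begin{itemize}
\item If $k\ge 2$ (possible only when $n>1$), then $k(n-1)\ge n$, so $y^k=0\in x^mR$.
\item If $k=1$, then $y=(p^{n-1},0)\in x^mR$ exactly when $n-1\ge \min(m,n)$, that is, when $m\le n-1$.
\end{itemize}

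Hence for $m,k\le n-1$ we always have $y^k\in x^mR$, so $x$ and $y$ are not adjacent in $\overline{\Gamma''}_{0(n-1)}(R)$. For $m=n$ and $k=1$ we have $x^n=(0,\bar p^{\,n})$ and $y=(p^{n-1},0)$, and neither lies in the ideal generated by the other, so $x$ and $y$ are adjacent in $\overline{\Gamma''}_{0n}(R)$. The two graphs have the same vertex set $V$ but different edge sets, which proves the claim.

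The main obstacle is choosing the pair; the verification is routine once the pair is fixed. The requirement $y\notin x^mR$ holds precisely when $m$ exceeds the $p$-valuation of $y$, so that valuation must be exactly $n-1$ for the threshold to sit at $m=n$. Meanwhile $y$ must have zero $q$-part, which makes $x^m\notin y^kR$ automatic, and $x$ must have unit $q$-part so that $x^n\neq 0$. The one point to check in writing it up is the case $k\ge 2$, where $y^k=0$: this is what rules out any adjacency from higher powers of $y$.
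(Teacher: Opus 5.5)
Your proof is correct and follows the paper's argument: the paper uses the same pair $p$ and $p^{n-1}q$, gets adjacency in $\overline{\Gamma''}_{0n}(\Bbb Z_{p^nq})$ from the exponents $m=n$, $k=1$, and gets non-adjacency in $\overline{\Gamma''}_{0(n-1)}(\Bbb Z_{p^nq})$ from $p^{n-1}q\in p^k\Bbb Z_{p^nq}$ for $k<n$; your CRT coordinates only serve as bookkeeping. You are more careful than the paper in two places it leaves implicit, namely the needed hypothesis $p\neq q$ (for $p=q$ both graphs are empty, since $\Bbb Z_{p^nq}=\Bbb Z_{p^{n+1}}$) and the fact that $y^k=0$ for $k\ge 2$, and your only slip is cosmetic: $y$ corresponds to $(p^{n-1}q,0)$, an associate of $(p^{n-1},0)$, which changes none of the memberships you check.
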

\begin{proof}
Let $n>1$.
Since $p^{n-1}q \not \in p^n\Bbb Z_{p^nq}$ and $p^n \not \in p^{n-1}q\Bbb Z_{p^nq}$, we have $p^{n-1}q$ and $p$ are adjacent in
$\overline{\Gamma''}_{0n}(\Bbb Z_{p^nq})$. But since $p^{n-1}q \in p^k\Bbb Z_{p^nq}$ for each $k<n$, we have
$p^{n-1}q$ and $p$ are not adjacent in $\overline{\Gamma''}_{0(n-1)}(\Bbb Z_{p^nq})$. Therefore, $\overline{\Gamma''}_{0(n-1)}(\Bbb Z_{p^nq})\not=\overline{\Gamma''}_{0n}(\Bbb Z_{p^nq})$.
\end{proof}

\begin{prop}
Let $J$ be an ideal of $R$. Then we have the following.
\begin{itemize}
\item [(a)] If $J$ is a prime ideal of $R$, then $\overline{\Gamma''}_{Ji}(R)$ is not a complete graph for each $i \in \Bbb N$.
\item [(b)] $\overline{\Gamma''}_{Jk}(R)$ is a subgraph of $\overline{\Gamma''}_{Jn}(R)$ for each $k,n \in \Bbb N$ with $k<n$.
\end{itemize}
\end{prop}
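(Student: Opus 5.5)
For part (a), I will exhibit two distinct vertices that are never adjacent, whatever $i$ is. By the standing convention after Definition \ref{2.1}, $J$ is proper and non-maximal. So there is a maximal ideal $M$ with $J\subsetneq M$.

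First I pick $x\in M\setminus J$. Then $xR+J\subseteq M\neq R$, so $x\in V$.

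Next I claim $x^2$ is also a vertex and $x^2\neq x$.
\begin{itemize}
\item Since $J$ is prime and $x\notin J$, we get $x^2\notin J$.
\item Also $x^2R+J\subseteq xR+J\neq R$, so $x^2\in V$.
\item If $x^2=x$, then $x(1-x)=0\in J$. Since $J$ is prime and $x\notin J$, this forces $1-x\in J$. Then $1=x+(1-x)\in xR+J$, which is a contradiction. Hence $x\neq x^2$.
\end{itemize}

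Now I show $x$ and $x^2$ are not adjacent in $\overline{\Gamma''}_{Ji}(R)$ for any $i$. Take arbitrary positive integers $m,n\leq i$ and compare the powers $x^m$ and $(x^2)^n=x^{2n}$ of the single element $x$.
\begin{itemize}
\item If $m\leq 2n$, then $x^{2n}\in x^mR\subseteq x^mR+J$.
\item If $m>2n$, then $x^m\in x^{2n}R\subseteq x^{2n}R+J$.
\end{itemize}
So the adjacency condition fails for every admissible pair $(m,n)$, and the graph is not complete. The only delicate point is checking that $x^2$ is a vertex distinct from $x$, and primeness of $J$ handles exactly that.

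For part (b), the vertex set $V$ is the same for all $i$, since it does not depend on $i$. Suppose $x$ and $y$ are adjacent in $\overline{\Gamma''}_{Jk}(R)$. Then there are $m,n\leq k$ with $x^m\notin y^nR+J$ and $y^n\notin x^mR+J$. Since $k<n$ (the index in the statement), these same $m,n$ are also at most that index. Hence the same pair witnesses adjacency in $\overline{\Gamma''}_{Jn}(R)$. Therefore every edge of the first graph is an edge of the second, and $\overline{\Gamma''}_{Jk}(R)$ is a subgraph of $\overline{\Gamma''}_{Jn}(R)$.
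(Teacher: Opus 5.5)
Your proof is correct and follows the paper's argument. For (a) the paper likewise uses a vertex $x$ and its square $x^2$, with primeness giving $x^2\notin J$ and $x^2\neq x$; you additionally justify $V\neq\emptyset$ via the standing non-maximality convention and spell out the non-adjacency of $x$ and $x^2$ that the paper leaves implicit, and (b) is the immediate observation the paper calls clear. One cosmetic fix: in (b), rename the exponents (say $s,t\leq k$), since reusing $n$ for both an exponent and the graph index makes ``$n\leq k<n$'' literally contradictory.
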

\begin{proof}
(a) Let $J$ be a prime ideal of $R$. Assume that $x \in V$. Then $x \not \in J$ and $Rx+J\not =R$. Thus $Rx^2+J\not =R$. Since $J$ is a prime ideal of $R$, $x^2\not \in I$. Hence, $x^2 \in V$. If $x^2=x$, then
$x(1-x)=0\in J$ implies that $x \in J$ or $Rx+J=R$. These contradictions show that  $x^2\not=x$.
Now, as $x$ and $x^2$ are not adjacent in $\overline{\Gamma''}_{Ji}(R)$ for each $i \in \Bbb N$, we have
$\overline{\Gamma''}_{Ji}(R)$ is not a complete graph for each $i \in \Bbb N$.

(b) This is clear.
\end{proof}

\begin{thm}\label{2.99}
Let $p$ and $q$ be two primes and $n \in \Bbb N$. Then $\overline{\Gamma''}_{0n}(\Bbb Z_{p^nq})$  is a complete tripartite graph
\end{thm}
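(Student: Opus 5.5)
The plan is to classify the vertices of $\overline{\Gamma''}_{0n}(\Bbb Z_{p^nq})$ by the principal ideal they generate, and then test adjacency class by class. By the Chinese remainder theorem every element $x$ of $R=\Bbb Z_{p^nq}$ is an associate of some $p^aq^b$ with $0\le a\le n$ and $0\le b\le 1$, so $xR=p^aq^bR$. The vertices (nonzero nonunits) are exactly those with $(a,b)\neq(0,0),(n,1)$. Powers behave simply: $x^mR=p^{\min(ma,n)}q^{b}R$ when $b=1$, and $x^mR=p^{\min(ma,n)}R$ when $b=0$. Containment of principal ideals is then coordinatewise: $p^{a}q^{b}R\subseteq p^{a'}q^{b'}R$ if and only if $a\ge a'$ and $b\ge b'$. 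Hence $x$ and $y$ are adjacent if and only if, for some $m,k\le n$, the exponent pairs of $x^m$ and $y^k$ are incomparable in this coordinatewise order.

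With this in hand I will take the natural candidate parts
$$A=\{x: xR=p^aR,\ 1\le a\le n\},\qquad B=\{x: xR=qR\},\qquad C=\{x: xR=p^aqR,\ 1\le a\le n-1\}.$$
First I check independence of each part. In $A$, all powers have $q$-exponent $0$, so their ideals form a chain. In $B$, all elements and their powers generate the same ideal. In $C$, all powers have $q$-exponent $1$, so again the ideals form a chain. So no edges occur inside any part, exactly as in the argument of Proposition \ref{2.12}.

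Next I check the cross edges.
\begin{itemize}
\item[(i)] For $A$--$B$: taking $m=k=1$ already gives the incomparable pairs $(a,0)$ and $(0,1)$, so these vertices are adjacent.
\item[(ii)] For $A$--$C$: with $x\in A$ and $y\in C$, I choose $m=n$ and $k=1$, exactly as in Proposition \ref{2.9}. This gives the pairs $(n,0)$ and $(c,1)$ with $c\le n-1$, which are incomparable, so again every such pair is adjacent. This is precisely where the bound $i=n$ is used.
\item[(iii)] For $B$--$C$: this is the step I expect to be the real obstacle. Here both $x$ and $y$ have $q$-exponent $1$, and $x$ has $p$-exponent $0$. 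So $y^kR\subseteq x^mR$ for all $m,k$, which appears to rule out any edge between $B$ and $C$.
\end{itemize}

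I would first try to rescue the $B$--$C$ edges by re-choosing the partition, for instance by moving elements between $B$ and $C$, or by reading the vertex set differently. I expect, however, that the computation will show that $B\cup C$ is independent and completely joined to $A$.

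If that is what happens, the honest conclusion is the following. The graph is the complete multipartite graph with parts $A$ and $B\cup C$. That is, it is complete tripartite only in the degenerate sense in which the split of $B\cup C$ into $B$ and $C$ carries no edges; otherwise it is complete bipartite. Accordingly I would state the proof with the three sets $A$, $B$, $C$, verify (i) and (ii) in full, and flag (iii) explicitly as the point where the statement must be checked against the definition.
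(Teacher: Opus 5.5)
Your computations are correct, and step (iii) is decisive, not just an apparent obstacle. If $x\in B$ and $y\in C$, then $x^mR=qR$ for every $m$, because $q$ is a unit modulo $p^n$. Also $y^k\in qR$ for every $k$. Hence $y^k\in x^mR$ for all $m,k$, so $x$ and $y$ are never adjacent; for instance, $\bar 3$ and $\bar 6$ in $\Bbb Z_{12}$.

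So the statement is false in the usual sense, where all three parts are nonempty and all cross edges are present. No re-partition can rescue it. In a complete multipartite graph, two distinct vertices are non-adjacent exactly when they lie in the same part, so the parts are forced to be the non-adjacency classes. Here these classes are $A$ (the multiples of $p$ not divisible by $q$) and $B\cup C$ (the nonzero multiples of $q$). What is true, for $p\neq q$, is
$$\overline{\Gamma''}_{0n}(\Bbb Z_{p^nq})\cong K_{p^{n-1}(q-1),\,p^n-1}.$$
Your (i) and (ii) also merge into a single argument. For $x\in A$ and $0\neq y\in qR$ one has $x^nR=p^nR$, $yR\subseteq qR$ and $p^nR\cap qR=0$, so $x^n\notin yR$ and $y\notin x^nR$.

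The paper's proof uses exactly your three sets ($V_1=B$, $V_2=A$, $V_3=C$) and asserts completeness with ``one can see''. It breaks precisely at the $V_1$--$V_3$ pairs you flagged. The paper's own figures side with you rather than with the theorem:
\begin{itemize}
\item $\overline{\Gamma''}_{02}(\Bbb Z_{12})$ is drawn as $K_{4,3}$, with parts $\{\bar 2,\bar 4,\bar 8,\bar{10}\}$ and $\{\bar 3,\bar 6,\bar 9\}$.
\item $\overline{\Gamma''}_{03}(\Bbb Z_{24})$ is drawn as $K_{8,7}$, with no edge between $\{\bar 3,\bar 9,\bar{15},\bar{21}\}$ and $\{\bar 6,\bar{12},\bar{18}\}$.
\end{itemize}

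In your write-up, drop the hedging and the plan to re-choose the parts. Replace ``complete tripartite in a degenerate sense'' with the correct conclusion: $A$, $B$, $C$ are independent sets, but the graph is complete bipartite. The claim survives only if an empty third part is allowed (parts $A$, $B\cup C$, $\emptyset$); note that the paper's $V_3$ is genuinely empty when $n=1$. Finally, assume $p\neq q$ explicitly. For $p=q$ the ring is $\Bbb Z_{p^{n+1}}$, and the graph is edgeless by the paper's result on $\Bbb Z_{p^n}$.
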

\begin{proof}
Let $x \in V$. Then $x=kp^aq^b$, where $\mathrm{gcd}(k,p)=1$, $\mathrm{gcd}(k,q)=1$, $a,b,k \in \Bbb N$, and $a\leq n$.
Let $V_1=\{x\in V:a=0\}$, $V_2=\{x\in V:b=0\}$, and $V_3=\{x\in V:a\not=0\ and \ b\not=0 \}$.
Then $V=V_1 \cup V_2 \cup V_3$. One can see that $\overline{\Gamma''}_{0n}(\Bbb Z_{p^nq})$ is a complete tripartite graph with parts $V_1$, $V_2$, and $V_3$.
\end{proof}

\begin{ex}\label{2.2}
Let $R=\Bbb Z_{12}$ and $I=0$. Then as we can see in the following figures,  $\overline{\Gamma''}_{02}(R)=\overline{\Gamma''}_{0i}(R)$ for each  positive integer $i\geq 3$.
\begin{figure}[H]

\begin{subfigure}[h]{0.5\textwidth}
\centering
\caption{$\overline{\Gamma''}_{01}(R)$}
\begin{tikzpicture}[auto,node distance=1.8cm,
  thick,main node/.style={circle,fill=black!10,font=\sffamily\tiny\bfseries}]
\node[main node] (1) {$\bar{6}$};
\node[main node] (2) [below left of=1] {$\bar{4}$};
\node[main node] (3) [left of=2] {$\bar{2}$};
\node[main node] (4) [right of=2] {$\bar{8}$};
\node[main node] (5) [right of=4] {$\bar{10}$};
\node[main node] (6) [below of=2] {$\bar{3}$};
\node[main node] (7) [right of=6] {$\bar{9}$};
\path[every node/.style={font=\sffamily\small}]
    (1) edge node [left] {} (2)
    (1) edge node [left] {} (4)
    (3) edge node [left] {} (6)
    (3) edge node [left] {} (7)
    (2) edge node [left] {} (6)
    (2) edge node [left] {} (7)
    (4) edge node [left] {} (7)
    (4) edge node [left] {} (6)
    (5) edge node [left] {} (6)
      (5) edge node [left] {} (7);
\end{tikzpicture}
\end{subfigure}
\begin{subfigure}[h]{0.5\textwidth}
\centering
\caption{$\overline{\Gamma''}_{02}(R)$}
\begin{tikzpicture}[auto,node distance=1.8cm,
  thick,main node/.style={circle,fill=black!10,font=\sffamily\tiny\bfseries}]
\node[main node] (1) {$\bar{6}$};
\node[main node] (2) [below left of=1] {$\bar{4}$};
\node[main node] (3) [left of=2] {$\bar{2}$};
\node[main node] (4) [right of=2] {$\bar{8}$};
\node[main node] (5) [right of=4] {$\bar{10}$};
\node[main node] (6) [below of=2] {$\bar{3}$};
\node[main node] (7) [right of=6] {$\bar{9}$};
\path[every node/.style={font=\sffamily\small}]
    (1) edge node [left] {} (2)
    (1) edge node [left] {} (4)
    (3) edge node [left] {} (6)
    (3) edge node [left] {} (7)
    (2) edge node [left] {} (6)
    (2) edge node [left] {} (7)
    (4) edge node [left] {} (7)
    (4) edge node [left] {} (6)
    (5) edge node [left] {} (6)
     (1) edge node [left] {} (5)
      (1) edge node [left] {} (3)
      (5) edge node [left] {} (7);
\end{tikzpicture}
\end{subfigure}

\end{figure}
 \end{ex}

\begin{ex}\label{2.3}
Let $R=\Bbb Z_{24}$ and $I=0$. Then as we can see in the following figures,  $\overline{\Gamma''}_{03}(R)=\overline{\Gamma''}_{0i}(R)$ for each  positive integer $i\geq 4$.
\begin{figure}[H]
\begin{subfigure}[h]{0.5\textwidth}
\centering
\caption{$\overline{\Gamma''}_{01}(R)$}
\begin{tikzpicture}[auto,node distance=1.5cm,
  thick,main node/.style={circle,fill=black!10,font=\sffamily\tiny\bfseries}]
\node[main node] (1) {$\bar{2}$};
\node[main node] (2) [right of=1] {$\bar{10}$};
\node[main node] (3) [right of=2] {$\bar{14}$};
\node[main node] (4) [right of=3] {$\bar{22}$};
\node[main node] (5) [below of=1] {$\bar{3}$};
\node[main node] (6) [right of=5] {$\bar{9}$};
\node[main node] (7) [right of=6] {$\bar{15}$};
\node[main node] (8) [right of=7] {$\bar{21}$};
\node[main node] (9) [below of=5] {$\bar{4}$};
\node[main node] (10) [right of=9] {$\bar{8}$};
\node[main node] (11) [right of=10] {$\bar{16}$};
\node[main node] (12) [right of=11] {$\bar{20}$};
\node[main node] (13) [below of=10] {$\bar{6}$};
\node[main node] (14) [right of=13] {$\bar{18}$};
\node[main node] (15) [below right of=13] {$\bar{12}$};
\path[every node/.style={font=\sffamily\small}]
    (1) edge node [left] {} (5)
    (1) edge node [left] {} (6)
        (1) edge node [left] {} (7)
         (1) edge node [left] {} (8)
          (2) edge node [left] {} (5)
           (2) edge node [left] {} (6)
            (2) edge node [left] {} (7)
             (2) edge node [left] {} (8)
              (3) edge node [left] {} (5)
               (3) edge node [left] {} (6)
                (3) edge node [left] {} (7)
                 (3) edge node [left] {} (8)
                  (4) edge node [left] {} (5)
                   (4) edge node [left] {} (6)
                    (4) edge node [left] {} (7)
                     (4) edge node [left] {} (8)
                      (9) edge node [left] {} (14)
                       (9) edge node [left] {} (5)
                        (9) edge node [left] {} (6)
                         (9) edge node [left] {} (7)
                          (9) edge node [left] {} (8)
                         (9) edge node [left] {} (13)
                         (10) edge node [left] {} (5)
       (10) edge node [left] {} (6)
        (10) edge node [left] {} (7)
         (10) edge node [left] {} (8)
          (11) edge node [left] {} (5)
           (11) edge node [left] {} (6)
            (11) edge node [left] {} (7)
             (11) edge node [left] {} (8)
              (12) edge node [left] {} (5)
               (12) edge node [left] {} (6)
                (12) edge node [left] {} (7)
                 (12) edge node [left] {} (8)
                  (13) edge node [left] {} (10)
                   (13) edge node [left] {} (11)
                    (13) edge node [left] {} (12)
            (14) edge node [left] {} (10)
                   (14) edge node [left] {} (11)
                    (14) edge node [left] {} (12)
                                        (15) edge node [left] {} (10)
                    (15) edge node [left] {} (11);
\end{tikzpicture}
\end{subfigure}
\begin{subfigure}[h]{0.5\textwidth}
\centering
\caption{$\overline{\Gamma''}_{02}(R)$}
\begin{tikzpicture}[auto,node distance=1.5cm,
  thick,main node/.style={circle,fill=black!10,font=\sffamily\tiny\bfseries}]
\node[main node] (1) {$\bar{2}$};
\node[main node] (2) [right of=1] {$\bar{10}$};
\node[main node] (3) [right of=2] {$\bar{14}$};
\node[main node] (4) [right of=3] {$\bar{22}$};
\node[main node] (5) [below of=1] {$\bar{3}$};
\node[main node] (6) [right of=5] {$\bar{9}$};
\node[main node] (7) [right of=6] {$\bar{15}$};
\node[main node] (8) [right of=7] {$\bar{21}$};
\node[main node] (9) [below of=5] {$\bar{4}$};
\node[main node] (10) [right of=9] {$\bar{8}$};
\node[main node] (11) [right of=10] {$\bar{16}$};
\node[main node] (12) [right of=11] {$\bar{20}$};
\node[main node] (13) [below right of=9] {$\bar{6}$};
\node[main node] (14) [right of=13] {$\bar{12}$};
\node[main node] (15) [right of=14] {$\bar{18}$};
\path[every node/.style={font=\sffamily\small}]
    (1) edge node [left] {} (5)
    (1) edge node [left] {} (6)
        (1) edge node [left] {} (7)
         (1) edge node [left] {} (8)
          (2) edge node [left] {} (5)
           (2) edge node [left] {} (6)
            (2) edge node [left] {} (7)
             (2) edge node [left] {} (8)
              (3) edge node [left] {} (5)
               (3) edge node [left] {} (6)
                (3) edge node [left] {} (7)
                 (3) edge node [left] {} (8)
                  (4) edge node [left] {} (5)
                   (4) edge node [left] {} (6)
                    (4) edge node [left] {} (7)
                     (4) edge node [left] {} (8)
                      (9) edge node [left] {} (14)
                       (9) edge node [left] {} (5)
                        (9) edge node [left] {} (6)
                         (9) edge node [left] {} (7)
                          (9) edge node [left] {} (8)
                         (9) edge node [left] {} (13)
                         (10) edge node [left] {} (5)
       (10) edge node [left] {} (6)
        (10) edge node [left] {} (7)
         (10) edge node [left] {} (8)
          (11) edge node [left] {} (5)
           (11) edge node [left] {} (6)
            (11) edge node [left] {} (7)
             (11) edge node [left] {} (8)
              (12) edge node [left] {} (5)
               (12) edge node [left] {} (6)
                (12) edge node [left] {} (7)
                 (12) edge node [left] {} (8)
                  (13) edge node [left] {} (10)
                   (13) edge node [left] {} (11)
                    (13) edge node [left] {} (12)
            (14) edge node [left] {} (10)
                   (14) edge node [left] {} (11)
                    (14) edge node [left] {} (12)
                                        (15) edge node [left] {} (10)
                    (15) edge node [left] {} (11)
                    (15) edge node [left] {} (9)
                    (15) edge node [left] {} (12);
\end{tikzpicture}
\end{subfigure}
\centering
\begin{subfigure}[b]{1\textwidth}
\centering
\caption{$\overline{\Gamma''}_{03}(R)$}
\begin{tikzpicture}[auto,node distance=1.5cm,
  thick,main node/.style={circle,fill=black!10,font=\sffamily\tiny\bfseries}]
\node[main node] (1) {$\bar{3}$};
\node[main node] (2) [right of=1] {$\bar{9}$};
\node[main node] (3) [right of=2] {$\bar{15}$};
\node[main node] (4) [right of=3] {$\bar{21}$};
\node[main node] (5) [below of =1] {$\bar{8}$};
\node[main node] (6) [left of=5] {$\bar{4}$};
\node[main node] (7) [left of=6] {$\bar{2}$};
\node[main node] (8) [right of=5] {$\bar{10}$};
\node[main node] (9) [right of=8] {$\bar{14}$};
\node[main node] (10) [right of=9] {$\bar{16}$};
\node[main node] (11) [right of=10] {$\bar{20}$};
\node[main node] (12) [right of=11] {$\bar{22}$};
\node[main node] (13) [below of=8 ] {$\bar{6}$};
\node[main node] (14) [right of=13 ] {$\bar{12}$};
\node[main node] (15) [right of=14] {$\bar{18}$};
\path[every node/.style={font=\sffamily\small}]
(1) edge node [left] {} (5)
(1) edge node [left] {} (6)
(1) edge node [left] {} (7)
(1) edge node [left] {} (8)
(1) edge node [left] {} (9)
(1) edge node [left] {} (10)
(1) edge node [left] {} (11)
    (1) edge node [left] {} (12)
        (2) edge node [left] {} (5)
(2) edge node [left] {} (6)
(2) edge node [left] {} (7)
(2) edge node [left] {} (8)
(2) edge node [left] {} (9)
(2) edge node [left] {} (10)
(2) edge node [left] {} (11)
    (2) edge node [left] {} (12)
         (3) edge node [left] {} (5)
(3) edge node [left] {} (6)
(3) edge node [left] {} (7)
(3) edge node [left] {} (8)
(3) edge node [left] {} (9)
(3) edge node [left] {} (10)
(3) edge node [left] {} (11)
    (3) edge node [left] {} (12)
         (4) edge node [left] {} (5)
(4) edge node [left] {} (6)
(4) edge node [left] {} (7)
(4) edge node [left] {} (8)
(4) edge node [left] {} (9)
(4) edge node [left] {} (10)
(4) edge node [left] {} (11)
    (4) edge node [left] {} (12)
         (13) edge node [left] {} (5)
(13) edge node [left] {} (6)
(13) edge node [left] {} (7)
(13) edge node [left] {} (8)
(13) edge node [left] {} (9)
(13) edge node [left] {} (10)
(13) edge node [left] {} (11)
    (13) edge node [left] {} (12)
     (14) edge node [left] {} (5)
(14) edge node [left] {} (6)
(14) edge node [left] {} (7)
(14) edge node [left] {} (8)
(14) edge node [left] {} (9)
(14) edge node [left] {} (10)
(14) edge node [left] {} (11)
    (14) edge node [left] {} (12)
        (15) edge node [left] {} (5)
(15) edge node [left] {} (6)
(15) edge node [left] {} (7)
(15) edge node [left] {} (8)
(15) edge node [left] {} (9)
(15) edge node [left] {} (10)
(15) edge node [left] {} (11)
    (15) edge node [left] {} (12);
\end{tikzpicture}
\end{subfigure}
\end{figure}
 \end{ex}

\begin{defn}\label{2.4}
Let $J$ be an ideal of $R$ and $x \in R$. We say that $x$ is a \textit{conilpotent relative to $J$ element} of $R$ if
$1-x \not \in Rx^n+J$ and $x^n \not \in R(1-x)+J$ for some positive integer $n$.
For a conilpotent relative to $J$ element $x$ of $R$, $\xi (x)$ denotes the smallest positive integer $k$ such that $1-x \not \in Rx^k+J$ and $x^k \not \in R(1-x)+J$. In addition, we set $\xi (R)=Sup \{\xi (x): x \ is \ a \ conilpotent  \ relative \ to \ J\ element \ of \ R \}$.
\end{defn}

\begin{thm}\label{2.8}
Let $J$ be an ideal of $R$. If $\overline{\Gamma''}_{J2}(R)=\overline{\Gamma''}_{J1}(R)$,  then $\xi (R)\not=2i$ for each $i \in \Bbb N$.
\end{thm}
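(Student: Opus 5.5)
The idea is to argue by contradiction. Assume $\overline{\Gamma''}_{J2}(R)=\overline{\Gamma''}_{J1}(R)$ and $\xi(R)=2i$ for some $i\in\Bbb N$. The finite supremum of a set of positive integers is attained, so there is a conilpotent relative to $J$ element $x$ of $R$ with $\xi(x)=2i$. By Definition \ref{2.4} this means
$$1-x\notin Rx^{2i}+J \quad\text{and}\quad x^{2i}\notin R(1-x)+J,$$
and for every $k<2i$ at least one of the conditions $1-x\notin Rx^{k}+J$, $x^{k}\notin R(1-x)+J$ fails. The plan is to show that $y:=x^i$ and $z:=1-x$ are adjacent in $\overline{\Gamma''}_{J2}(R)$ but not in $\overline{\Gamma''}_{J1}(R)$.

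\textbf{Step 1: $y$ and $z$ are distinct vertices.} Each condition is read off from the two non-membership relations above.
\begin{itemize}
\item From $1-x\notin Rx^{2i}+J$ we get $1-x\notin J$. We also get $Rx^{2i}+J\neq R$, hence $Rx^{i}+J\neq R$, since $Rx^{2i}\subseteq Rx^i$.
\item From $x^{2i}\notin R(1-x)+J$ we get $R(1-x)+J\neq R$. We also get $x^i\notin J$, since otherwise $x^{2i}=x^i\cdot x^i\in J$.
\item If $x^i=1-x$, then $x^{2i}=x^i(1-x)\in R(1-x)$, which is a contradiction. So $y\neq z$.
\end{itemize}
Hence $y,z\in V$ and $y\neq z$.

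\textbf{Step 2: adjacency in $\overline{\Gamma''}_{J2}(R)$.} Take exponents $m=2$ for $y$ and $n=1$ for $z$; both are at most $2$. Then $y^2=x^{2i}\notin zR+J$ and $z\notin y^2R+J$ by the displayed relations. So $y$ and $z$ are adjacent in $\overline{\Gamma''}_{J2}(R)$. By hypothesis they are then adjacent in $\overline{\Gamma''}_{J1}(R)$.

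\textbf{Step 3: contradiction.} Adjacency in $\overline{\Gamma''}_{J1}(R)$ only allows exponents equal to $1$, so it gives $x^i\notin R(1-x)+J$ and $1-x\notin Rx^i+J$. Thus $x$ satisfies the defining condition of Definition \ref{2.4} with exponent $i$. Since $i<2i$, this gives $\xi(x)\le i<2i$, contradicting $\xi(x)=2i$. Therefore $\xi(R)\neq 2i$ for every $i\in\Bbb N$.

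The main obstacle is Step 1, checking that $x^i$ and $1-x$ really lie in $V$ and are distinct; everything else is a direct translation of the definitions. The key point there is that all the required vertex conditions are extracted from the two non-membership relations defining $\xi(x)=2i$.
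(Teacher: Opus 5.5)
Your proof follows the paper's argument. You take $x$ with $\xi(x)=2i$ and note that $x^i$ and $1-x$ are adjacent in $\overline{\Gamma''}_{J2}(R)$ via exponents $2$ and $1$. You then transfer the edge to $\overline{\Gamma''}_{J1}(R)$ and contradict the minimality of $\xi(x)$. Your Step 1, checking that $x^i$ and $1-x$ are distinct vertices, is a useful addition that the paper omits.

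One inference in Step 1 is backwards. The inclusion $Rx^{2i}\subseteq Rx^i$ only gives $Rx^{2i}+J\subseteq Rx^i+J$, and that cannot show $Rx^i+J\neq R$. Argue instead that $1=rx^i+a$ with $a\in J$ would give $1=r^2x^{2i}+a(2rx^i+a)\in Rx^{2i}+J$. This contradicts $Rx^{2i}+J\neq R$, which you already derived from $1-x\notin Rx^{2i}+J$.
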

\begin{proof}
Let $\overline{\Gamma''}_{J2}(R)=\overline{\Gamma''}_{J1}(R)$.
Assume contrary that $\xi (R)=2i$ for some $i \in \Bbb N$. Then there is $x \in R$ such that $\xi (x)=2i$. Thus $1-x \not \in Rx^{2i}+I$ and $x^{2i} \not \in R(1-x)+I$. Hence,  $1-x \not \in R(x^i)^2+I$ and $(x^i)^2 \not \in R(1-x)+I$. Thus $1-x$ and $x^i$ are adjacent in $\overline{\Gamma''}_{J2}(R)$.  Since $\overline{\Gamma''}_{J2}(R)=\overline{\Gamma''}_{J1}(R)$, we have $1-x$ and $x^i$ are adjacent in $\overline{\Gamma''}_{J1}(R)$. This implies that  $1-x \not \in Rx^i+I$ and $x^i \not \in R(1-x)+I$, which is a contradiction since $\xi (x)=2i$. Therefore, $\xi (R)\not=2i$.
\end{proof}

\begin{prop}\label{2.5}
Let $J$ be an ideal of $R$ such that $J\subseteq Jac(R) $ and $x$ be a non-unite of $R\setminus Jac(R) $ such that $x^n=x^{n+1}$ for some $n \in \Bbb N$. Then $1-x \not \in Rx^n+J$ and $x^n \not \in R(1-x)+J$. That is, $x$ is a conilpotent relative to $J$ element of $R$.
\end{prop}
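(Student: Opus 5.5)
Since $x^n=x^{n+1}$, induction gives $x^n=x^{n+k}$ for all $k\ge 0$, and in particular $x^n=x^{2n}$. So $e:=x^n$ is an idempotent, and $x^n(1-x)=x^n-x^{n+1}=0$. We prove the two non-memberships separately, each by contradiction, using the idempotent $e$, the relation $e(1-x)=0$, and $J\subseteq Jac(R)$.

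\emph{Step 1: $1-x\notin Rx^n+J$.} Suppose $1-x=rx^n+j$ with $r\in R$ and $j\in J$. Multiplying by $1-e$ and using $(1-e)x^n=0$ gives $(1-e)(1-x)=(1-e)j$. Multiplying instead by $e$ and using $e(1-x)=0$ gives $0=re+ej$, so $re=-ej\in J$. Substituting back, $1-x=re+j\in J\subseteq Jac(R)$. Hence $x=1-(1-x)$ is a unit, since $1-a$ is a unit for every $a\in Jac(R)$. This contradicts the hypothesis that $x$ is a non-unit.

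\emph{Step 2: $x^n\notin R(1-x)+J$.} Suppose $x^n=s(1-x)+j$. Multiplying by $e$ and using $e(1-x)=0$ gives $e=ej\in J\subseteq Jac(R)$. An idempotent lying in $Jac(R)$ is zero, because $1-e$ is then a unit and $e(1-e)=0$. So $x^n=0$. Then $1-x$ is a unit, since $(1-x)(1+x+\dots+x^{n-1})=1-x^n=1$. Also $x\in Nil(R)\subseteq Jac(R)$, which contradicts $x\notin Jac(R)$.

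The two steps together give exactly the definition of $x$ being conilpotent relative to $J$, with exponent $n$.

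The main obstacle is Step 1: one must see that multiplying by $e$ forces the coefficient term $re$ into $J$, so that $1-x$ lands in $J$. It is also the step where the non-unit hypothesis is used, while Step 2 is where $x\notin Jac(R)$ is used. If the direct computation in Step 1 failed, the fallback would be the decomposition $R\cong Re\times R(1-e)$: in the first factor $x$ acts as $1$, and in the second it is nilpotent, and one would argue componentwise.
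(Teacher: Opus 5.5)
Your proof is correct. Step 2 is essentially the paper's argument. The paper also multiplies $x^n=r(1-x)+a$ by $x^n$ and uses $x^n(1-x)=0$ to get $x^{2n}=ax^n\in J\subseteq Jac(R)$. It then concludes $x\in Jac(R)$ because $Jac(R)$ is a radical ideal. You instead use $x^{2n}=x^n$ to place the idempotent $e$ in $J$, show it is zero, and conclude $x$ is nilpotent, which is an equivalent finish.

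The real difference is Step 1, where the paper never uses the idempotent. From $1-x=rx^n+a$ it writes $1-a=x(1+rx^{n-1})$, and since $1-a$ is a unit, so is $x$. This shows that $1-x\notin Rx^n+J$ holds for \emph{every} non-unit $x$ once $J\subseteq Jac(R)$. Indeed, $1-x\in Rx^n+J$ already gives $1\in Rx+J$. So the hypothesis $x^n=x^{n+1}$ is needed only for the second non-membership.

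Your computation via $e$ is valid, and it yields the extra fact that, under $x^n=x^{n+1}$, $1-x\in Rx^n+J$ forces $1-x\in J$. However, it is longer and hides this generality. Your remark that Step 1 is the main obstacle is therefore backwards: it is the easy half. The multiplication by $1-e$ is also not needed.
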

\begin{proof}
First assume contrary that $1-x \in Rx^n+J$.  Then $1-x=rx^n+a$ for some $r \in R$ and $a \in J$. Thus
$x(rx^{n-1}+1)=1-a$. Since $a \in J\subseteq Jac(R) $, $1-a$ has an inverse $b$, say in $R$. Therefore, $bx(rx^{n-1}+1)=1$. It follows that $Rx=R$. Which is a contradiction since $x$ is a non-unite of $R$. Thus $1-x \not\in Rx^n+J$.
Now, assume contrary that $x^n \not \in R(1-x)+J$. Then $x^n =r(1-x)+a$ for some $r \in R$ and $a \in J$. Thus
$x^n =r-rx+a$ and hence $x^{2n} =rx^n-rx^{n+1}+ax^n$. It follows that $x^{2n} =rx^n-rx^n+ax^n=ax^n \in J$. Since $J\subseteq Jac(R) $, $x^{2n}\in Jac(R) $. It follows that $x\in Jac(R) $, which is a contradiction. Thus
$x^n \not \in R(1-x)+J$, i.e., $x$ is a conilpotent relative to $J$ element of $R$.
\end{proof}

\begin{prop}\label{2.6}
Let $J$ be an ideal of $R$ and $x \in R$. Then we have the following.
\begin{itemize}
\item [(a)] If for some $n \in \Bbb N$ we have $x^{n+1}=x^n \in V$, then $1-x \in V$.
\item [(b)] If $J\subseteq Jac(R) $, $x$ is a non-unite of $R$, and $1-x\in V$, then $x^n \in V$ for each $n \in \Bbb N$.
\end{itemize}
\end{prop}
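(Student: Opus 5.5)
For both parts we simply unwind the definition $V=\{y\in R\setminus J : yR+J\neq R\}$ and check its two conditions separately. In (a) the key input is the identity $x^n(1-x)=x^n-x^{n+1}=0$. In (b) it is the fact that $1-a$ is a unit for every $a\in J\subseteq Jac(R)$.

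For (a), assume $x^{n+1}=x^n\in V$.

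\emph{Step 1: $1-x\notin J$.} Suppose instead $1-x\in J$. Then $x\equiv 1 \pmod J$, so $x^n\equiv 1\pmod J$. This gives $x^nR+J=R$, contradicting $x^n\in V$.

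\emph{Step 2: $(1-x)R+J\neq R$.} Suppose instead $1=r(1-x)+a$ with $r\in R$ and $a\in J$. Multiplying by $x^n$ and using $x^n(1-x)=0$ gives $x^n=ax^n\in J$. This contradicts $x^n\notin J$. Hence $1-x\in V$.

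For (b), assume $J\subseteq Jac(R)$, $x$ is a non-unit, $1-x\in V$, and fix $n\in\Bbb N$.

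\emph{Step 1: $x^nR+J\neq R$.} Suppose $x^nr+a=1$ with $a\in J$. Then $x^nr=1-a$. Since $a\in Jac(R)$, the element $1-a$ is a unit, so $x^n$ is a unit. Then $x$ is a unit, which is a contradiction.

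\emph{Step 2: $x^n\notin J$.} Suppose $x^n\in J\subseteq Jac(R)$. Since $Jac(R)$ is an intersection of maximal ideals, it is a radical ideal, so $x\in Jac(R)$. Then $1-x$ is a unit, and so $(1-x)R+J=R$. This contradicts $1-x\in V$. Hence $x^n\in V$.

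The only point requiring any care is in (b), Step 2: passing from $x^n\in Jac(R)$ to $x\in Jac(R)$. This holds because every maximal ideal is prime, so $x^n\in\mathfrak m$ forces $x\in\mathfrak m$ for each maximal ideal $\mathfrak m$. The remaining steps are direct verifications.
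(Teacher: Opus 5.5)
Your proof is correct. Part (a) and Step 1 of (b) follow the paper's argument. The one small difference in (a) is that you show $1-x\in J \Rightarrow x^nR+J=R$ directly, by reducing modulo $J$. The paper instead goes through $Rx+J=R$ and quotes a lemma from the author's earlier paper on extended ideal-based cozero-divisor graphs.

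The genuine difference is in Step 2 of (b). To rule out $x^n\in J$, you pass through $Jac(R)$: it is a radical ideal, so $x\in Jac(R)$, so $1-x$ is a unit. The paper writes $1=x^n+(1-x^n)$ and uses $1-x^n=(1-x)(1+x+\cdots+x^{n-1})\in R(1-x)$. Then $x^n\in J$ immediately gives $R(1-x)+J=R$.

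The paper's factorization is more elementary. It also shows that this half of (b) needs neither $J\subseteq Jac(R)$ nor that $x$ is a non-unit; it uses only $R(1-x)+J\neq R$. Your route uses the Jacobson hypothesis here too, which is harmless since it is assumed. The step you flagged as requiring care (passing from $x^n\in Jac(R)$ to $x\in Jac(R)$) simply does not arise in the paper's version.
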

\begin{proof}
(a) Let for some $n \in \Bbb N$, $x^{n+1}=x^n \in V$. Then $x^n \not \in J$ and $Rx^n+J\not=R$. Assume contrary that $1-x\in J$ or $R(1-x)+J=R$. If $1-x\in J$, then $Rx+J=R$. Thus by \cite[Lemma 2.1]{14051}, $Rx^n+J=R$, which is a contradiction. Hence $1-x\not \in J$. If
$R(1-x)+J=R$, then $1=r(1-x)+a$ for some $r \in R$ and $a \in J$. This implies that $x^n=rx^n-rx^{n+1}+ax^n$. Since $x^{n+1}=x^n$, we have $x^n=ax^n \in J$. This contradiction shows that $R(1-x)+J\not=R$. Therefore, $1-x \in V$.

(b) Let $J\subseteq Jac(R) $, $x$ a non-unite of $R$, and $1-x\in V$.  Assume contrary that $x^n \in J$ or $Rx^n+J=R$ for some $n \in \Bbb N$. If $x^n \in J$, then $1=x^n+(1-x^n)\in J+R(1-x)$. It follows that $R(1-x)+J=R$, which is a contradiction. Hence, $x^n \not\in J$. If
$Rx^n+J=R$, then $Rx+J=R$. Hence $1-rx \in J \subseteq Jac(R)$ for some $r \in R$. It follows that $x$ is an unite, which is a contradiction. Therefore, $x^n \in V$ for each $n \in \Bbb N$.
\end{proof}

\begin{cor}\label{2.7}
Let $J$ be an ideal of $R$ such that $J\subseteq Jac(R) $ and $x$ be a non-unite of $R\setminus Jac(R) $ such that $x^n=x^{n+1}$ for some $n \in \Bbb N$. Then $x^n$ and $1-x$ are adjacent in $\Gamma''_Jac(R) $.
\end{cor}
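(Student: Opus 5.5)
The statement says "$\Gamma''_Jac(R)$", which is presumably a typo. I will read the conclusion as: $x^n$ and $1-x$ are distinct vertices of $V$ that are adjacent in $\overline{\Gamma''}_{J1}(R)=\Gamma''_J(R)$. This automatically gives adjacency in $\overline{\Gamma''}_{Ji}(R)$ for every $i$, by part (b) of the Proposition preceding Theorem \ref{2.99}. The plan is to assemble three facts from the results just proved: both elements are vertices, they are distinct, and the two non-membership conditions hold.

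\emph{Vertices.} Since $x$ is a non-unit and $J\subseteq Jac(R)$, once we know $1-x\in V$, Proposition \ref{2.6}(b) gives $x^n\in V$. So the main point is $1-x\in V$.

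\begin{itemize}
\item To see $1-x\notin J$: if $1-x\in J\subseteq Jac(R)$, then $x=1-(1-x)$ is a unit, contradicting the hypothesis.
\item To see $R(1-x)+J\neq R$: write $1=r(1-x)+a$ with $a\in J$. Multiplying by $x^n$ and using $x^{n+1}=x^n$ gives $x^n=ax^n\in J\subseteq Jac(R)$. Then $x\in Jac(R)$, because $Jac(R)$ is an intersection of maximal ideals, which are prime. This contradicts $x\notin Jac(R)$.
\end{itemize}

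Hence $1-x\in V$, and then $x^n\in V$ by Proposition \ref{2.6}(b). Alternatively, the same computation shows $x^n\notin J$, so Proposition \ref{2.6}(a) can be used instead.

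\emph{Distinctness.} Suppose $x^n=1-x$. Then $x^{n+1}=x-x^2$, while also $x^{n+1}=x^n=1-x$, so $x-x^2=1-x$. Multiplying $x^n=1-x$ by $x^{n}$ and using idempotence of $x^n$ (which follows from $x^{n+1}=x^n$, since $x^{2n}=x^n$) gives $x^n=x^n(1-x)=x^n-x^{n+1}=0\in J$. This contradicts $x^n\in V$. Therefore the two vertices are distinct.

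\emph{Adjacency.} Proposition \ref{2.5} applies verbatim under the present hypotheses and gives $1-x\notin Rx^n+J$ and $x^n\notin R(1-x)+J$. This is exactly adjacency of $x^n$ and $1-x$ in $\Gamma''_J(R)$, with both exponents equal to $1$ applied to the elements $x^n$ and $1-x$. The only real obstacle is the distinctness check and correctly interpreting the garbled graph name; everything else is a direct citation of Propositions \ref{2.5} and \ref{2.6}.
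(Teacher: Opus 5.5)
Your proof is correct and follows the paper's argument essentially step for step: you show $1-x\in V$ by the same two contradictions, obtain $x^n\in V$ from Proposition~\ref{2.6}(b), and read off adjacency from Proposition~\ref{2.5}. Your extra distinctness check, which the paper omits, is valid but already follows from Proposition~\ref{2.5}, since $x^n=1-x$ would put $1-x$ in $Rx^n+J$.
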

\begin{proof}
If $1-x \in J\subseteq Jac(R) $, then $x$ is an unite. Which is a contradiction. Thus $1-x \not\in J$. If $R(1-x)+J=R$, then $1=r(1-x)+a$ for some $r \in R$ and $a \in J$. This implies that $x^n=rx^n-rx^{n+1}+ax^n$. Since $x^{n+1}=x^n$, we have $x^n=ax^n \in J\subseteq Jac(R) $. Hence, $x \in Jac(R) $, which is a contradiction. Therefore, $1-x \in V$. Thus by Proposition \ref{2.6} (b), $x^n \in V$.
Now, the result follows from Proposition \ref{2.5}.
\end{proof}

\begin{lem}\label{2.10}
Let $J$ be an ideal of $R$. If $x^ny$ and $y$ are adjacent in $\Gamma''_Jac(R) $ for some $n \in \Bbb N$ with $n>1$, then $x^{n-k}y$ and $y$ are adjacent in $\Gamma''_Jac(R) $ for each $k<n$.
\end{lem}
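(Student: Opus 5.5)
The statement writes $\Gamma''_Jac(R)$, which I read as the graph $\Gamma''_J(R)=\overline{\Gamma''}_{J1}(R)$. In that graph, two distinct vertices $a,b$ are adjacent exactly when $a\notin bR+J$ and $b\notin aR+J$. So I must show, for $1\le k<n$, three things: $x^{n-k}y\in V$, $x^{n-k}y\neq y$, and the two non-membership conditions hold for the pair $x^{n-k}y,\ y$. The whole argument rests on the divisibility chain
$$x^ny\in x^{n-k}yR\subseteq x^{n-k}yR+J,$$
since $x^ny=x^k\cdot x^{n-k}y$.

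\emph{Vertex condition.} If $x^{n-k}y\in J$, then $x^ny=x^k(x^{n-k}y)\in J$, contradicting $x^ny\in V$. If $x^{n-k}yR+J=R$, then, using $x^{n-k}yR\subseteq yR$, we get $yR+J=R$, contradicting $y\in V$. Hence $x^{n-k}y\in V$.

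\emph{First non-membership: $y\notin x^{n-k}yR+J$.} Suppose instead that $y\in x^{n-k}yR+J$. Then $yR+J\subseteq x^{n-k}yR+J$. The reverse inclusion is automatic, so $yR+J=x^{n-k}yR+J$. From the chain above, $x^ny\in x^{n-k}yR+J=yR+J$. This contradicts the adjacency hypothesis $x^ny\notin yR+J$.

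\emph{Second non-membership: $x^{n-k}y\notin yR+J$.} This is the obstacle. The inclusion $x^{n-k}yR\subseteq yR$ holds for every element, so $x^{n-k}y\in yR\subseteq yR+J$ always. The condition therefore never holds, and the reduction cannot go through.

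The same inclusion also breaks the hypothesis. Since $x^ny\in yR\subseteq yR+J$ for every $x,y$, the pair $x^ny,\ y$ can never be adjacent in $\Gamma''_J(R)$. So, read literally, the lemma is vacuously true: its hypothesis is never satisfied, and the conclusion follows trivially.

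I would first record this vacuous proof, since it is valid as stated. I would then look for the intended nontrivial reading. The most plausible one is adjacency of $x^ny$ and $x$, or of $x^n$ and $y$, perhaps in the extended graph $\overline{\Gamma''}_J(R)$. For such a variant I would reuse the pattern of the first two steps. Divisibility, in the form $x^n\in x^{n-k}R$, gives the vertex condition and one non-membership for free. The remaining non-membership would be obtained by contradiction, pulling $x^{n-k}\in yR+J$ back to a statement about $x^n$ by multiplying by $x^k$.
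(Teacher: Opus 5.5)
Your proof is correct, and it takes a different route from the paper's. Your key observation is that $x^ny\in yR\subseteq yR+J$ for all $x,y\in R$. This holds for any ideal, so reading the graph with $Jac(R)$ as the ideal changes nothing. Hence $x^ny$ and $y$ are never adjacent in $\Gamma''_J(R)$, and the lemma holds vacuously.

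The paper never notices this. It checks $x^{n-k}y\in V$ exactly as you do, and then argues by contradiction in two cases:
\begin{itemize}
\item from $y\in Rx^{n-k}y+J$ it multiplies by $x^n$;
\item from $x^{n-k}y\in Ry+J$ it multiplies by $x^k$.
\end{itemize}
Each case yields $x^ny\in Ry+J$, which the paper treats as contradicting the hypothesis. That is formally valid, but only because the hypothesis is itself inconsistent. The premise of the second case always holds, and the ``contradiction'' $x^ny\in Ry+J$ is always true.

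For this reason, your remark that the second non-membership ``cannot go through'' is slightly overstated. Under the hypothesis it does follow, by \emph{ex falso}, and that is what the paper's second case does implicitly.

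Your route is shorter and more informative, because it shows the lemma has no content. The same computation shows that its application in Theorem~\ref{2.11} (the one assuming $y^2=y$) is vacuous too. For idempotent $y$ one has $(xy)^m=x^my\in y^{m'}R$ for all $m,m'$, so $xy$ and $y$ are never adjacent in any $\overline{\Gamma''}_{Ji}(R)$.

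The paper's case analysis would only matter for a nondegenerate variant, such as the ones you sketch at the end. For the statement as written, that final speculation is unnecessary.
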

\begin{proof}
Suppose that $x^ny$ and $y$ are adjacent in $\Gamma''_Jac(R) $ for some $n \in \Bbb N$. Then $y \not \in Rx^ny+I$ and $x^ny \not \in Ry+I$. Let $k<n$. First not that if $x^{n-k}y\in J$, then $x^ny=x^{n-k}x^ky\in J$, a contradiction. If $Rx^{n-k}y+ J=R$, then $R=Rx^{n-k}y+ J\subseteq Ry+J$, which is a contradiction. Hence $x^{n-k}y\in V$.
Assume contrary that $y \in Rx^{n-k}y+I$ or $x^{n-k}y \in Ry+I$. If $y \in Rx^{n-k}y+I$, then $x^ny \in Rx^{n-k}x^ny+Ix^n\subseteq Ry+I$, which is a contradiction. If $x^{n-k}y \in Ry+I$, then $x^ny=x^kx^{n-k}y \in Rx^ky+x^kI\subseteq Ry+I$, which is a contradiction. Therefore, $x^{n-k}y$ and $y$ are adjacent in $\Gamma''_Jac(R) $ for each $k<n$.
\end{proof}

\begin{thm}\label{2.11}
Let $J$ be an ideal of $R$. If $xy$ and $y$ are adjacent in $\overline{\Gamma''}_{Ji}(R)$ for some $i \in \Bbb N$ and $y^2=y$, then $xy$ and $y$ are adjacent in $\overline{\Gamma''}_{J1}(R)$.
\end{thm}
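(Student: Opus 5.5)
Since both $xy$ and $y$ are already vertices of $V$ (the vertex set does not depend on $i$), only the adjacency condition has to be checked. Adjacency in $\overline{\Gamma''}_{Ji}(R)$ gives positive integers $m,n\le i$ with
$$(xy)^m\notin y^nR+J \quad\text{and}\quad y^n\notin (xy)^mR+J .$$
Because $y^2=y$, we have $y^n=y$ and $(xy)^m=x^my^m=x^my$. The hypothesis therefore becomes
$$x^my\notin Ry+J \quad\text{and}\quad y\notin Rx^my+J .$$
The goal is to show $xy\notin Ry+J$ and $y\notin Rxy+J$, i.e.\ adjacency in $\overline{\Gamma''}_{J1}(R)=\Gamma''_J(R)$. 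We argue by contradiction, treating each of the two conditions separately.

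\emph{First condition.} Suppose $xy\in Ry+J$. Multiplying by $x^{m-1}$ gives
$$x^my\in Rx^{m-1}y+x^{m-1}J\subseteq Ry+J,$$
since $J$ is an ideal. This contradicts $x^my\notin Ry+J$. (In fact $xy\in Ry$ always holds, so this case shows the hypothesis can only be satisfied in a degenerate way; either way the implication goes through.)

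\emph{Second condition.} Suppose $y\in Rxy+J$, say $y=rxy+a$ with $r\in R$, $a\in J$. We iterate by substituting this expression for $y$ into the factor $y$ on the right-hand side:
$$y=rx(rxy+a)+a=r^2x^2y+(rxa+a).$$
Since $J$ is an ideal, $rxa+a\in J$. Inducting on $m$ gives $y\in Rx^my+J$, which contradicts $y\notin Rx^my+J$. This is the same manoeuvre as in Lemma \ref{2.10}, where $x^ny\in Rx^{n-k}x^ny+Ix^n$ was used.

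Both conditions hold, so $xy$ and $y$ are adjacent in $\overline{\Gamma''}_{J1}(R)$. The only step needing care is the induction $y\in Rx^ky+J\Rightarrow y\in Rx^{k+1}y+J$. The inductive step is to substitute $y=rxy+a$ into $y=sx^ky+b$, giving
$$y=sx^k(rxy+a)+b=(sr)x^{k+1}y+(sx^ka+b)\in Rx^{k+1}y+J.$$
The idempotence $y^2=y$ is used only to collapse the powers $y^n$ and $y^m$ to $y$ at the start.
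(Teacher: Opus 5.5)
Your proof is correct and follows the paper's route. The paper likewise uses $y^2=y$ to rewrite the hypothesis as $x^my\notin Ry+J$ and $y\notin Rx^my+J$, then descends from $x^my$ to $xy$ by citing Lemma~\ref{2.10}, whose argument for $xy\notin Ry+J$ is exactly your multiplication by $x^{m-1}$; for the other condition it multiplies $y\in Rxy+J$ by $x^m$ to contradict $x^my\notin Ry+J$, whereas you iterate $y=rxy+a$ to contradict $y\notin Rx^my+J$, and both work.

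Your side remark is sharper than you state it: since $(xy)^m=x^my\in Ry=Ry^n$ for all $m,n$, the vertices $xy$ and $y$ are never adjacent in any $\overline{\Gamma''}_{Ji}(R)$. So the hypothesis is not merely degenerate but unsatisfiable, and the theorem (like Lemma~\ref{2.10}) holds vacuously.
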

\begin{proof}
Suppose that $xy$ and $y$ are adjacent in $\overline{\Gamma''}_{Ji}(R)$ for some $i \in \Bbb N$ and $y^2=y$. Then
there exist $n, m \in \Bbb N$ such that $n\leq i$ and $m\leq i$ with
$y^m \not \in R(xy)^n+I$ and $(xy)^n \not \in Ry^m+I$. This implies that $y \not \in Rx^ny+I$ and $x^ny \not \in Ry+I$ since $y^2=y$. Hence, $x^ny$ and $y$ are adjacent in $\overline{\Gamma''}_{J1}(R)$. Now, by Lemma \ref{2.10}, we have
$xy$ and $y$ are adjacent in $\overline{\Gamma''}_{J1}(R)$.
\end{proof}

\begin{thm}\label{2.12}
Let $J$ be an ideal of $R$ such that $J\subseteq Jac(R)$, $i \in \Bbb N$, and $Max(R) = \{\mathfrak{m_1}, \mathfrak{m_2}\}$. Then  $\overline{\Gamma''}_{Ji}(R)\setminus Jac(R)$ is a complete bipartite graph with parts $\mathfrak{m_i}\setminus Jac(R)$ for $i = 1, 2$ if and only if for every $x, y \in \mathfrak{m_i}\setminus Jac(R)$, for some $i = 1, 2$,  the ideals $Rx^n+J$ and $Ry^m+J$ are totally ordered (i.e. either
$Rx^n+J\subseteq Ry^m+J$ or $Ry^m+J\subseteq Rx^n+J$) for each $n, m \in \Bbb N$ with $m,n\leq i$.
\end{thm}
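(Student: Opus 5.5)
The plan is to first identify the vertex set of $\overline{\Gamma''}_{Ji}(R)\setminus Jac(R)$ with the disjoint union $(\mathfrak{m_1}\setminus Jac(R))\cup(\mathfrak{m_2}\setminus Jac(R))$. Then I would show that every vertex of one part is adjacent to every vertex of the other part, with no hypothesis beyond $Max(R)=\{\mathfrak{m_1},\mathfrak{m_2}\}$. The equivalence in the statement then reduces to the assertion that there are no edges inside a part, and this turns out to be a literal restatement of the total-order condition.

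For the vertex set, I use that $J\subseteq Jac(R)$. If $Rx+J=R$, write $1=rx+a$ with $a\in J$; since $1-a$ is a unit, $x$ is a unit, exactly as in the proof of Proposition \ref{2.6}(b). Conversely, a unit obviously satisfies $Rx+J=R$. Hence $Rx+J\neq R$ holds iff $x$ is a non-unit, iff $x\in \mathfrak{m_1}\cup\mathfrak{m_2}$. An element outside $Jac(R)$ is automatically outside $J$. So the vertices not in $Jac(R)$ are precisely the elements of $(\mathfrak{m_1}\cup \mathfrak{m_2})\setminus Jac(R)$. Since $\mathfrak{m_1}\cap\mathfrak{m_2}=Jac(R)$, the sets $\mathfrak{m_1}\setminus Jac(R)$ and $\mathfrak{m_2}\setminus Jac(R)$ are disjoint and cover this vertex set.

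For the cross edges, let $x\in\mathfrak{m_1}\setminus Jac(R)$ and $y\in\mathfrak{m_2}\setminus Jac(R)$. Then $x\notin\mathfrak{m_2}$ and $y\notin\mathfrak{m_1}$. Take $n=m=1\le i$. If $x\in Ry+J$, then $x\in\mathfrak{m_2}$, because $J\subseteq Jac(R)\subseteq\mathfrak{m_2}$; this is a contradiction. Symmetrically, $y\notin Rx+J$. So $x$ and $y$ are adjacent in $\overline{\Gamma''}_{J1}(R)$, and hence in $\overline{\Gamma''}_{Ji}(R)$ by part (b) of the unnumbered Proposition before Theorem \ref{2.99}. (The same argument with any $n,m\le i$ also works, using primeness of $\mathfrak{m_2}$ to get $x^n\notin\mathfrak{m_2}$.)

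Consequently, the graph is complete bipartite with the stated parts iff no two distinct vertices $x,y$ in the same $\mathfrak{m_i}\setminus Jac(R)$ are adjacent. By Definition \ref{2.1}, non-adjacency means that for every $n,m\le i$ we have $x^n\in Ry^m+J$ or $y^m\in Rx^n+J$. This is equivalent to $Rx^n+J\subseteq Ry^m+J$ or $Ry^m+J\subseteq Rx^n+J$, which is exactly the total-order condition. For $x=y$ the condition holds trivially, since the ideals $Rx^n+J$ form a descending chain in $n$, so nothing is lost by quantifying over all $x,y$. Both implications then follow at once.

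The only delicate step is the vertex-set identification, namely making sure that $J\subseteq Jac(R)$ gives "$Rx+J\neq R$ iff $x$ is a non-unit", and that removing $Jac(R)$ makes the two parts disjoint. The rest is unwinding definitions. I would also note in passing that the parts are nonempty exactly when $\mathfrak{m_i}\neq Jac(R)$, which holds because there are two distinct maximal ideals.
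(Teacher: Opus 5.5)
Your proposal is correct and follows the paper's proof. In both, cross-part adjacency comes from taking $n=m=1$ together with $J\subseteq Jac(R)\subseteq\mathfrak{m_2}$ (resp.\ $\mathfrak{m_1}$), and the absence of edges inside a part is literally the total-order condition unwound from Definition \ref{2.1}. Your explicit identification of the vertices outside $Jac(R)$ with $(\mathfrak{m_1}\cup\mathfrak{m_2})\setminus Jac(R)$, and your remark that the case $x=y$ is harmless, fill in details the paper leaves implicit.
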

\begin{proof}
First suppose that $\overline{\Gamma''}_{Ji}(R)\setminus Jac(R)$ is a complete bipartite graph with parts $\mathfrak{m_i}\setminus Jac(R)$ for $i = 1, 2$.
Assume contrary that there exist $x, y \in \mathfrak{m_1}\setminus Jac(R)$ such that $Rx^n+I\not\subseteq Ry^m+I$ and $Ry^m+I\not\subseteq Rx^n+I$ for some $n, m \in \Bbb N$ with $m,n\leq i$. Then $x$ is adjacent to $y$ in $\mathfrak{m_1}\setminus Jac(R)$, which is a contradiction. Conversely, by assumption, one can see that for every elements $x$ and $y$ in $\mathfrak{m_i}\setminus Jac(R)$ for some $ i = 1, 2$, $x$ is not adjacent to $y$. Now, let $x \in \mathfrak{m_1}\setminus  \mathfrak{m_2}$ and  $y \in \mathfrak{m_2}\setminus  \mathfrak{m_1}$. If $x \in Ry+J$, then $x \in \mathfrak{m_2}+Jac(R)=\mathfrak{m_2}$ since $J\subseteq Jac(R)$. This contradiction shows that  $x\not \in Ry+J$. Similarly, $y\not \in Rx+J$ and so $x$ is adjacent to $y$. Thus $\overline{\Gamma''}_{Ji}(R)\setminus Jac(R)$ is a complete bipartite graph with parts $\mathfrak{m_1}\setminus  \mathfrak{m_2}$ and $\mathfrak{m_2}\setminus  \mathfrak{m_1}$.
\end{proof}

\begin{defn}\label{2.13}
Let $J$ be an ideal of $R$. For a positive integer $i$, the \textit{$i$-extended ideal based
zero-divisor graph of $R$}, is
a simple graph $\overline{\Gamma}_{Ji}(R)$ of $R$ with vertices $\{x \in R\setminus J\: |\: xy=0\ for \ some\ y \in R \setminus I \}$. The distinct vertices $x$ and $y$ are adjacent if and only if there exist two positive integers $n\leq i$ and $m\leq i$ such that $x^ny^m \in I$ with $x^n \not\in I$ and $y^m \not\in I$
\end{defn}

\begin{thm}\label{2.14}
Let $J$ be an ideal of $R$ and $\overline{\Gamma''}_{Ji}(R)$ be a complete graph for some $i \in \Bbb N$. Then $\overline{\Gamma}_{Ji}(R)$ is also a complete graph.
\end{thm}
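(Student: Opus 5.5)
The plan is to show first that completeness of $\overline{\Gamma''}_{Ji}(R)$ forces very rigid behaviour on $V$: the product of any two distinct vertices lies in $J$. Adjacency in $\overline{\Gamma}_{Ji}(R)$ can then be read off with the exponents $n=m=1$. Throughout, $I$ in Definition \ref{2.13} is read as $J$.

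\textbf{Step 1: vertex sets.} Every vertex $x$ of $\overline{\Gamma}_{Ji}(R)$ lies in $V$. Indeed, $x\notin J$ by definition. Suppose $Rx+J=R$, and pick $y\notin J$ with $xy=0$. Write $1=rx+a$ with $r\in R$ and $a\in J$. Then $y=rxy+ay=ay\in J$, a contradiction. Hence $Rx+J\neq R$, so $x\in V$.

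\textbf{Step 2: the key observation.} Let $x\in V$ and $z\in R$ with $xz\notin J$ and $xz\neq x$. We claim $x$ and $xz$ are distinct vertices that are not adjacent in $\overline{\Gamma''}_{Ji}(R)$.
\begin{itemize}
\item First, $xz\in V$, since $R(xz)+J\subseteq Rx+J\neq R$.
\item Now take any $n,m\le i$ and compare $x^m$ with $(xz)^n=x^nz^n$.
\item If $n\ge m$, then $(xz)^n\in Rx^m\subseteq Rx^m+J$.
\item If $n<m$, we need $x^m\in R(xz)^n+J$. This is not automatic, and it is the main obstacle.
\end{itemize}
To handle the case $n<m$, first apply the argument of part (a) of the earlier Proposition to $x$ and $x^2$. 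For $n,m$ the powers $x^m$ and $(x^2)^n$ are always comparable by divisibility, so $x$ and $x^2$ are never adjacent. Completeness therefore forces, for each $x\in V$, one of the following: $x^2=x$, or $x^2\notin V$. Since $Rx^2+J\subseteq Rx+J\neq R$, the second option means $x^2\in J$.

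In either case the troublesome comparison disappears:
\begin{itemize}
\item If $x^2\in J$, then $x^m\in J$ for every $m\ge 2$, so only $m=1\le n$ matters.
\item If $x^2=x$, then $x^m=x$ for all $m$, and $(xz)^n=xz^n\in Rx=Rx^m$.
\end{itemize}
So in all cases one of the two non-membership conditions fails. Thus $x$ and $xz$ are not adjacent, which contradicts completeness. Hence for $x\in V$ and any $z$, either $xz\in J$ or $xz=x$.

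\textbf{Step 3: conclusion.} Let $x\neq y$ be vertices of $\overline{\Gamma}_{Ji}(R)$; by Step 1 both lie in $V$. Suppose $xy\notin J$. Step 2 applied to $x$ with $z=y$ gives $xy=x$. Applied to $y$ with $z=x$, it gives $xy=y$. Hence $x=y$, a contradiction. Therefore $xy\in J$ while $x,y\notin J$, so $n=m=1\le i$ witnesses adjacency in $\overline{\Gamma}_{Ji}(R)$. This shows $\overline{\Gamma}_{Ji}(R)$ is complete.

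The delicate point is the dichotomy $x^2=x$ or $x^2\in J$ in Step 2, since it is what makes the powers of $x$ and $xz$ comparable for all exponents $\le i$.
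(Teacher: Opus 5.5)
Your proof is correct, but it takes a different route from the paper's.

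\textbf{The paper's route.} The paper works with $x^iy$ rather than $xy$. Assuming $x,y$ are non-adjacent in $\overline{\Gamma}_{Ji}(R)$, it takes $x^iy\notin J$, notes $x^iy\in V$, and uses completeness to make $x^iy$ adjacent to $x$ in $\overline{\Gamma''}_{Ji}(R)$. The exponent of $x$ in $(x^iy)^n$ is $in\geq i\geq m$, so $(x^iy)^n\in Rx^m$ always holds. Hence no case split on $n<m$ and no dichotomy for $x^2$ is needed.

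\textbf{What the paper leaves unchecked.} Two steps are not justified there. First, non-adjacency in $\overline{\Gamma}_{Ji}(R)$ only says that for each $n,m\leq i$ one of $x^ny^m\notin J$, $x^n\in J$, $y^m\in J$ holds. So $x^iy\notin J$ fails when $x^i\in J$. Second, completeness can only be invoked after checking $x^iy\neq x$.

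\textbf{Your route.} You multiply by $y$ only once and refute $xy\notin J$ directly, which avoids both issues. The conditions $x,y\notin J$ are given, and you handle distinctness explicitly by applying Step 2 symmetrically in Step 3. The price is the case $n<m$, which you settle through the non-adjacency of $x$ and $x^2$ (the argument of part (a) of the proposition on prime $J$).

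\textbf{What your route gives in addition.} Your argument proves more than the statement. When $\overline{\Gamma''}_{Ji}(R)$ is complete, every $x\in V$ satisfies $x^2=x$ or $x^2\in J$, and any two distinct elements of $V$ have product in $J$. So adjacency in $\overline{\Gamma}_{Ji}(R)$ is always witnessed by $n=m=1$, and $\overline{\Gamma}_{J1}(R)$ is already complete.
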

\begin{proof}
Let $x$ and $y$ be two distinct elements in $\overline{\Gamma}_{Ji}(R)$. Assume contrary that $x$ is not adjacent to $y$ in  $\overline{\Gamma}_{Ji}(R)$. Then $x^ny^m \not \in J$ for each $n,m\leq i$. In particular, $x^iy\not \in I$.
Clearly $V(\Gamma_I(R))\subseteq V$. Thus $y \in V$ and so $Ry+I\not=R$. It follows that $Rx^iy+I\not=R$.
Therefore, $x^iy \in V$. Then $x^iy-x$ in $\overline{\Gamma''}_{Ji}(R)$ since $\overline{\Gamma''}_{Ji}(R)$ is a complete graph.
This implies that $(x^iy)^n \not \in Rx^m+J$ and $x^m \not \in R(x^iy)^n+J$ for some  $n,m\leq i$. Now, 
$(x^iy)^n \not \in Rx^m+J$ implies that $in < m$. This is a contradiction since $m\leq i$. Thus $x$ is adjacent to $y$ in  $\overline{\Gamma}_{Ji}(R)$, as needed.
\end{proof}

Recall that an ideal $J$ of $R$ is said to be a \textit{semiprime ideal} if $J$ is proper ideal of $R$ and $x^n \in I$ implies that $x \in I$ for each $n \in \Bbb N$.
\begin{thm}\label{2.15}
Let $J$ be a semiprime ideal of $R$ and $i \in \Bbb N$. If $V(\overline{\Gamma}_{Ji}(R))=V$, then $\overline{\Gamma}_{Ji}(R)$ is not a complete graph. In particular, $\overline{\Gamma''}_{Ji}(R)$ is not a complete graph.
\end{thm}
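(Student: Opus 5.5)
The plan is to exhibit two distinct vertices of $\overline{\Gamma}_{Ji}(R)$ that are not adjacent, using semiprimeness of $J$ to rule out every possible edge. The ``in particular'' part then follows from Theorem \ref{2.14} by contraposition: if $\overline{\Gamma''}_{Ji}(R)$ were complete, $\overline{\Gamma}_{Ji}(R)$ would be complete as well.

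\textbf{Step 1: $V\neq\emptyset$.} Since $J$ is proper and non-maximal, there is a maximal ideal $\mathfrak m\supsetneq J$. Any $x\in\mathfrak m\setminus J$ satisfies $xR+J\subseteq\mathfrak m\neq R$, so $x\in V$. By the hypothesis $V=V(\overline{\Gamma}_{Ji}(R))$, this $x$ is also a vertex of $\overline{\Gamma}_{Ji}(R)$.

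\textbf{Step 2: choose a partner $y$ with $y^m\equiv \pm x^{k}\pmod J$ for some $k\geq1$.} For such a $y$, every product $x^ny^m$ is $\pm x^{n+k'}$ modulo $J$ for some $k'\geq1$. Since $J$ is semiprime and $x\notin J$, no power of $x$ lies in $J$, so $x^ny^m\notin J$ for all $n,m$. Hence $x$ and $y$ cannot be adjacent. It remains to make sure that $y\in V$ and $y\neq x$. The natural candidates are as follows.
\begin{itemize}
\item[(i)] $y=x^2$. Semiprimeness gives $x^2\notin J$. Also $Rx^2+J\subseteq Rx+J\neq R$, so $x^2\in V$.
\item[(ii)] $y=x+j$ with $0\neq j\in J$. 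Then $y\equiv x\pmod J$, so $y\in V$ trivially, and $y^m\equiv x^m$.
\item[(iii)] $y=-x$. This lies in $V$, and $y^m=\pm x^m$.
\end{itemize}
Any one of these that differs from $x$ gives the required non-adjacent pair.

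\textbf{Main obstacle: the degenerate case.} All three candidates fail only when $x=x^2$, $J=0$ and $2x=0$, so I expect this idempotent, characteristic-$2$ situation to be the delicate point. Here I would vary the choice of $x$ inside $\mathfrak m\setminus J$.
\begin{itemize}
\item If some element of $\mathfrak m\setminus J$ is not idempotent, apply (i) to it.
\item Otherwise every element of $\mathfrak m\setminus\{0\}$ is idempotent, with $J=0$. If $\mathfrak m$ has two distinct nonzero elements $e,f$, both are vertices, and $e^nf^m=ef$ for all $n,m$. If $ef\neq0$ they are non-adjacent. If $ef=0$, then $e+f\in\mathfrak m$ is a nonzero idempotent with $e(e+f)=e\neq0$, so I would use the pair $e,\ e+f$ instead.
\item If $\mathfrak m=\{0,e\}$ is the only possibility, I would argue that $V$ has at most one vertex. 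In that case the graph is trivially (vacuously) complete, and the statement would have to be read for $|V|\geq2$. I would check this edge case against the paper's conventions rather than force it.
\end{itemize}
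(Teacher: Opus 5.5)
Your generic argument is correct. Each of (i)--(iii) lies in $V$, semiprimeness keeps every product $x^ny^m$ out of $J$, and your subcases inside $\mathfrak m$ are handled properly: a non-idempotent element, or two distinct nonzero idempotents $e,f$, replacing $f$ by $e+f$ when $ef=0$.

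The step that fails is the last bullet. When $J=0$ and $\mathfrak m=\{0,e\}$, it is not true that $V$ has at most one vertex, because you have only looked inside one maximal ideal. In this situation $\mathfrak m=Re$ with $e$ idempotent, so $Re\cong\Bbb Z_2$ and $R\cong Re\times R(1-e)$. Here $R(1-e)\cong R/\mathfrak m$ is a field and is itself a maximal ideal of $R$, and all its nonzero elements are vertices. If $|R/\mathfrak m|>2$, this other maximal ideal contains a non-idempotent element, and your candidate (i) applied to it finishes the argument.

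If $|R/\mathfrak m|=2$, however, then $R\cong\Bbb Z_2\times\Bbb Z_2$ with $J=0$, which is semiprime and not maximal, and $V=V(\overline{\Gamma}_{0i}(R))=\{(1,0),(0,1)\}$. We have $(1,0)(0,1)=0$, while $(1,0)\notin(0,1)R$ and $(0,1)\notin(1,0)R$. So both $\overline{\Gamma}_{0i}(R)$ and $\overline{\Gamma''}_{0i}(R)$ are the single edge $K_2$, hence complete, and not vacuously. This case cannot be closed by any argument: it is a counterexample to the statement as written, and restricting to $|V|\geq 2$ does not rescue it. The statement needs to exclude exactly the case $J=0$, $R\cong\Bbb Z_2\times\Bbb Z_2$. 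Your case analysis, with the repair above, proves that corrected version.

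The paper's own proof has the same blind spot. It takes an edge $x-y$ with $x^ny^m\in J$ and applies completeness to $x$ and $z=x^n(x+y^m)$ without checking $z\neq x$; for $x=(1,0)$, $y=(0,1)$ one gets $z=x$. It also reads the adjacency of $x$ and $z$ as $xz\in J$, which is justified only because, for semiprime $J$, $x^ny^m\in J$ holds if and only if $xy\in J$. Your route via $x^2$, $x+j$, $-x$ is simpler than the paper's, and your explicit attention to distinctness is exactly what exposes the problem.
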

\begin{proof}
Let $V(\overline{\Gamma}_{Ji}(R))=V$. 
Assume contrary that $\overline{\Gamma}_{Ji}(R)$ is a complete graph.
Let $x, y \in V(\overline{\Gamma}_{Ji}(R))$. Then $x^ny^m \in I$ for some $n,m\leq i$. 
Now as $J$ be a semiprime ideal of $R$, we get that $x^n(x+y^m) \not \in I$. Since $x \in V(\overline{\Gamma}_{Ji}(R))=V$, we have $Rx+I \not=R$. It follows that $Rx^n(x+y^m)+I \not=R$. Therefore, $x^n(x+y^m)\in V=V(\overline{\Gamma}_{Ji}(R))$. Thus as $\overline{\Gamma}_{Ji}(R)$ is a complete graph, we have
$$
x^{n+2}+x^{n+1}y^m=x(x^n(x+y^m))\in I.
$$
It follows that $x^{n+2}\in I$. Now since $I$ is a semiprime ideal of $R$, we have $x \in I$, which is a required contradiction. Now the last assertion follows from Theorem \ref{2.14}.
\end{proof}

\bibliographystyle{amsplain}

\end{document}